\documentclass{article}

\usepackage{arxiv}
\usepackage[utf8]{inputenc}
\usepackage[T1]{fontenc}
\usepackage{microtype}
\usepackage{booktabs}
\usepackage{amsmath,amssymb,mathtools,amsthm}
\usepackage{xcolor}
\usepackage{natbib}
\usepackage{float}
\usepackage{hyperref}
\usepackage[capitalize,noabbrev]{cleveref}
\usepackage{enumitem}
\usepackage{tikz}
\usetikzlibrary{arrows.meta,positioning,calc}

\hypersetup{
  hidelinks,
  pdftitle={Infinitesimal Causality},
  pdfsubject={Smooth intervention protocols, Lie brackets, and causal diagnostics},
  pdfauthor={Sridhar Mahadevan},
  pdfkeywords={infinitesimal causality, intervention fields, Lie brackets, statistical manifolds, Markov categories}
}

\theoremstyle{plain}
\newtheorem{theorem}{Theorem}[section]
\newtheorem{proposition}[theorem]{Proposition}
\newtheorem{corollary}[theorem]{Corollary}
\theoremstyle{definition}
\newtheorem{definition}[theorem]{Definition}
\newtheorem{example}[theorem]{Example}

\theoremstyle{remark}

\crefname{theorem}{theorem}{theorems}
\crefname{proposition}{proposition}{propositions}
\crefname{corollary}{corollary}{corollaries}
\crefname{definition}{definition}{definitions}
\crefname{example}{example}{examples}
\crefname{assumption}{assumption}{assumptions}
\crefname{remark}{remark}{remarks}

\newcommand{\R}{\mathbb R}
\newcommand{\M}{\mathcal M}
\newcommand{\Dvis}{\mathcal D_{\mathrm{vis}}}
\newcommand{\Pvis}{\Pi_{\mathrm{vis}}}
\newcommand{\eps}{\varepsilon}
\newcommand{\doop}{\operatorname{do}}
\newcommand{\Span}{\operatorname{span}}

\newcommand{\id}{\mathrm{id}}
\newcommand{\StatInf}{\mathsf{Stat}_{\infty}}
\newcommand{\IC}{\textsc{IC}}

\title{\textbf{Infinitesimal Causality}}

\author{Sridhar Mahadevan\\
Adobe Research and University of Massachusetts Amherst\\
\texttt{smahadev@adobe.com, mahadeva@umass.edu}}

\date{\today}

\begin{document}
\maketitle

\begin{abstract}
Interventions can be varied continuously in many causal models.  Differentiating
a specified smooth intervention protocol produces vector fields on a statistical
model, and their Lie brackets describe the noncommutativity of the corresponding
local perturbations.  We formulate this differential geometry of interventions
on smooth statistical models and call the resulting framework
\emph{infinitesimal causality} (\IC).  Given a constant-rank distribution
spanned by visible intervention fields, we define the normal Lie-bracket
residual and show that its vanishing is exactly the involutivity condition in
the classical Frobenius theorem.  We establish the coordinate invariance of the
zero-residual property and characterize its dependence on the intervention
protocol, visible span, and metric.  Fully observed and latent-variable
examples delineate the additional structural assumptions needed to interpret
bracket residuals causally.

We also distinguish tangent vectors on a statistical parameter manifold from
derivatives of stochastic kernels.  In the finite-state linearization of a
Markov category, normalization and copy compatibility yield well-typed
first-order defects.  Normalization is automatic for differentiable paths of
stochastic kernels, whereas copy compatibility characterizes a more restrictive
deterministic or comonoid-preserving perturbation.  Together, the geometric and
kernel-level constructions make \IC\ a precise foundation for
Lie-bracket-based causal diagnostics and identify the assumptions required to
pass from local intervention geometry to causal conclusions.
\end{abstract}

\keywords{Infinitesimal Causality \and Intervention Fields \and Lie Brackets
\and Statistical Manifolds \and Markov Categories}

\section{Introduction}

Causal inference distinguishes passive observation from active intervention
\citep{pearl2009causality,rubin2005causal}.  Pearl's do-calculus is a discrete
identification calculus: its three rules transform interventional probability
expressions under separation conditions in specifically mutilated causal
graphs.  Categorical formulations express the same graph surgery and
conditional-independence reasoning in Markov categories and string diagrams
\citep{fritz2023dseparation,jacobs2018causal}.

A different question arises when the strength, duration, or target of an
intervention varies smoothly.  A one-parameter intervention protocol can be
differentiated at zero strength, producing a tangent direction on a statistical
model.  Applying this construction at every model point gives a vector field.
Several intervention protocols generate a distribution of tangent directions,
and Lie brackets test whether successive infinitesimal perturbations remain in
their visible span.

We call this framework \emph{infinitesimal causality} (\IC).  The acronym refers
to the combined study of smooth intervention fields, their compatibility with
probabilistic information structure, and their closure under Lie brackets.

This geometric construction motivates Lie-bracket diagnostics used in companion
algorithmic work such as BRIDGE/SKFM
\citep{mahadevan2026latentconfoundedcausaldiscovery}.  A Lie bracket depends on
the chosen intervention fields, while a projected residual also depends on the
visible distribution and metric.  Its causal interpretation therefore requires
a model connecting intervention protocols to causal mechanisms.  Noncommuting
interventions can occur in a fully observed smooth system, while commuting
visible interventions can coexist with an omitted common cause; these two cases
provide useful boundary examples for the theory.

The paper makes the following contributions:
\begin{itemize}[leftmargin=*,itemsep=2pt]
  \item the category \(\StatInf\) organizes smooth statistical families with
        specified sufficient-statistic presentations and
        structure-preserving channels;
  \item smooth intervention protocols induce vector fields on a statistical
        manifold;
  \item the normal component of their brackets measures failure of the chosen
        visible distribution to be involutive;
  \item under constant rank, vanishing residuals are equivalent to Frobenius
        integrability;
  \item this zero-residual property is coordinate invariant, although its norm
        and statistical estimation depend on additional choices; and
  \item first-order copy/discard calculations require a separate, well-typed
        linearization of stochastic kernels.
\end{itemize}

The analysis proceeds at two related but distinct levels.  On the statistical
manifold, intervention fields and their brackets determine a local geometric
structure.  In a finite-state Markov setting, derivatives of stochastic kernels
admit normalization and copy-compatibility tests.  A model-specific realization
is needed to relate these levels.  We make that interface explicit and state the
further completeness, exclusion, faithfulness, and estimation assumptions
needed for causal identification.

\section{Frobenius Markov Categories and Tangent Semantics}
\label{sec:categorical-background}

This section recalls the categorical and differential structures used in the
paper.  The two structures enter at different levels.  Markov categories
describe stochastic maps and the classical operations of copying and
discarding data.  Tangent categories axiomatize differentiation.  In the
present framework, tangent semantics are supplied concretely by smooth
parameter manifolds and by linearization in spaces of kernels; we do not assume
that an arbitrary Markov category itself carries a tangent structure.

\subsection{Markov categories and classical data}

A Markov category is a symmetric monoidal category \(\mathcal C\) in which
every object \(X\) is equipped with copying and discarding morphisms
\[
  \Delta_X:X\longrightarrow X\otimes X,
  \qquad
  !_X:X\longrightarrow I.
\]
These maps form a commutative comonoid and are compatible with the monoidal
product \citep{fritz2020synthetic}.  In the familiar category of measurable
spaces and Markov kernels,
\[
  \Delta_X(x)=(x,x)
\]
copies a realized value, while \(!_X\) maps every value to the monoidal unit.
Discarding is compatible with every stochastic map.  Copying is not: a
stochastic kernel generally generates fresh randomness, so copying one sampled
output differs from sampling twice.  Morphisms that preserve copying are the
deterministic morphisms under standard hypotheses.

This distinction is central to the linearized copy test in
\cref{sec:tests}.  The equation
\[
  \Delta_YK=(K\otimes K)\Delta_X
\]
does not express ordinary stochasticity; it says that \(K\) transports
classical, copyable information without introducing randomness.  Its
derivative is therefore a stringent compatibility condition rather than a
generic law of probabilistic perturbations.

\subsection{What additional Frobenius structure means}

A commutative comonoid \((\Delta,! )\) is not, by itself, a Frobenius algebra.
A Frobenius algebra also has multiplication and unit maps
\[
  \mu:X\otimes X\longrightarrow X,
  \qquad
  \eta:I\longrightarrow X,
\]
with monoid, comonoid, and Frobenius compatibility laws.  Such structure can be
specified for classical observables in concrete categorical models, and it is
useful for diagrammatic treatments of equality, comparison, and
disintegration \citep{cho2017disintegration}.  It is additional data; it does
not follow from the axioms of a Markov category.

Accordingly, the results below use only the canonical copy/discard comonoid
unless multiplication and unit are explicitly supplied.  The phrase
\emph{Frobenius Markov} is best understood here as describing a Markov setting
equipped with this additional classical structure, not as a property enjoyed
by every Markov category.  None of the main integrability results requires the
extra maps \(\mu\) and \(\eta\).

There is also a separate use of the name Frobenius.  A categorical Frobenius
algebra concerns multiplication and copying.  The classical Frobenius theorem
in differential geometry concerns involutive distributions and foliations.
The two notions are historically and mathematically distinct.  This paper
relates copy/discard compatibility to intervention geometry only after a
model-specific realization has connected stochastic kernels with vector fields.

\subsection{Sufficient statistics as copyable summaries}

Let \(p:\Theta\rightsquigarrow X\) be a statistical model represented by a
smoothly parameterized Markov kernel, and let
\[
  t:X\longrightarrow S
\]
be a deterministic sufficient statistic.  In an ordinary dominated model,
sufficiency may be expressed by the Fisher--Neyman factorization
\[
  p_\theta(x)=g_\theta(t(x))h(x).
\]
Categorically, the same idea says that the parameter-dependent information in
the observation factors through the statistic, with the remaining
randomization independent of the parameter once \(t(X)\) is known.

Because \(t\) is deterministic, it respects copying:
\[
  \Delta_S t=(t\otimes t)\Delta_X.
\]
Thus a realized statistic can be retained, copied, and passed to multiple
downstream computations as classical information.  This does not mean that
the stochastic observation itself can be duplicated as two independent draws.
It means that, after the observation has been made, its sufficient summary is
a deterministic value.

For an exponential family
\[
  p_\theta(x)
  =
  h(x)\exp\{\langle\theta,T(x)\rangle-A(\theta)\},
\]
the sufficient statistic also generates the score directions:
\[
  \partial_i\log p_\theta(x)
  =
  T_i(x)-\mathbb E_\theta[T_i(X)].
\]
Sufficient statistics therefore connect the Markov description of classical
data with the tangent description of a regular statistical family.

\subsection{The category
\texorpdfstring{\(\StatInf\)}{Stat-infinity} of presented statistical models}
\label{sec:statinf}

It is useful to collect the preceding data into a category.  The subscript
\(\infty\) indicates that the parameter spaces and parameter dependence are
smooth; it does not assert that this category has already been equipped with a
tangent-category structure.

\begin{definition}[Presented regular statistical model]
\label{def:statinf-object}
An object of \(\StatInf\) is a tuple
\[
  A=(\Theta,X,S,p,t)
\]
consisting of:
\begin{enumerate}[leftmargin=*,itemsep=2pt]
  \item a finite-dimensional smooth parameter manifold \(\Theta\);
  \item measurable sample and statistic spaces \(X\) and \(S\);
  \item a smoothly parameterized Markov kernel
        \(p:\Theta\rightsquigarrow X\), written
        \(\theta\mapsto p_\theta\);
  \item a deterministic sufficient-statistic map \(t:X\to S\); and
  \item a regularity stratum on which the Fisher information is smooth and
        positive definite.
\end{enumerate}
\end{definition}

The sufficiency requirement may be stated by a factorization in dominated
models or by the corresponding conditional-independence factorization in the
ambient Markov category.  Including the map \(t\) makes the chosen classical
summary part of the presentation; two objects may describe the same family
with different sufficient presentations.

\begin{definition}[Morphisms in \(\StatInf\)]
\label{def:statinf-morphism}
Let
\[
  A=(\Theta,X,S,p,t),
  \qquad
  B=(\Phi,Y,S',q,t')
\]
be objects.  A morphism
\[
  (f,K,h):A\longrightarrow B
\]
consists of a smooth map \(f:\Theta\to\Phi\), a smoothly
\(\theta\)-dependent Markov kernel \(K_\theta:X\rightsquigarrow Y\), and a
deterministic map \(h:S\to S'\), subject to
\[
  q_{f(\theta)}=K_\theta\circ p_\theta
  \tag{S1}
\]
as states on \(Y\), and
\[
  t'\circ K_\theta=h\circ t
  \tag{S2}
\]
as kernels \(X\rightsquigarrow S'\).  Equation (S1) says that the target
statistical family is the pushforward of the source family.  Equation (S2)
says that the selected sufficient summary is transported deterministically by
\(h\).
\end{definition}

The second equation is intentionally restrictive.  A general stochastic
channel between sample spaces need not preserve a chosen sufficient statistic.
Such a channel remains a morphism in the ambient Markov category, but it is not
a morphism of these \emph{presented} statistical models unless the chosen
summaries commute with it.

\begin{proposition}
\label{prop:statinf-category}
The objects and morphisms above form a category \(\StatInf\).
\end{proposition}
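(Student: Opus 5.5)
We define identities and composition explicitly, check that composites again satisfy (S1) and (S2), and then inherit associativity and the unit laws componentwise from the three ambient categories: smooth manifolds, (smoothly parameterized) Markov kernels, and deterministic maps. The only real subtlety is that the kernel component of a morphism depends on the source parameter \(\theta\). So composition must reindex the second kernel along the first smooth map.

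\textbf{Identities.} For \(A=(\Theta,X,S,p,t)\) we take \(\id_A=(\id_\Theta,\ \id_X,\ \id_S)\), where the kernel component is the constant family \(\theta\mapsto\id_X\). This family is trivially smooth in \(\theta\). Condition (S1) reads \(p_\theta=\id_X\circ p_\theta\). Condition (S2) reads \(t\circ\id_X=\id_S\circ t\). Both hold by the unit laws of the Markov category.

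\textbf{Composition.} Given \((f,K,h):A\to B\) and \((g,L,k):B\to C\), where \(C=(\Psi,Z,S'',r,t'')\), define
\[
  (g,L,k)\circ(f,K,h)=\bigl(g\circ f,\ \theta\mapsto L_{f(\theta)}\circ K_\theta,\ k\circ h\bigr).
\]
The map \(g\circ f\) is smooth, and \(k\circ h\) is deterministic because deterministic morphisms are closed under composition. For (S1) we compute
\[
  r_{g(f(\theta))}=L_{f(\theta)}\circ q_{f(\theta)}=L_{f(\theta)}\circ K_\theta\circ p_\theta,
\]
using (S1) for \((g,L,k)\) at the point \(f(\theta)\) and then (S1) for \((f,K,h)\). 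For (S2) we compute
\[
  t''\circ L_{f(\theta)}\circ K_\theta=k\circ t'\circ K_\theta=k\circ h\circ t,
\]
using (S2) for \(L\) at \(f(\theta)\) and then (S2) for \(K\). Associativity holds in each component. On the kernel component, both bracketings of three composable morphisms give
\[
  \theta\mapsto M_{g(f(\theta))}\circ L_{f(\theta)}\circ K_\theta,
\]
by associativity of kernel composition. The unit laws follow immediately from the definition of the identity.

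\textbf{Main obstacle.} The step needing care is verifying that \(\theta\mapsto L_{f(\theta)}\circ K_\theta\) is again a \emph{smoothly} \(\theta\)-dependent Markov kernel. The definition leaves "smoothly parameterized" informal, so I would fix a convention first. The convention I would choose is that a family \(K_\theta\) is smooth when \(\theta\mapsto\int\varphi\,dK_\theta(\cdot\mid x)\) is smooth, uniformly in \(x\) with derivatives bounded, for every bounded measurable \(\varphi\). Under this convention, reindexing \(L\) along the smooth map \(f\) preserves smoothness by the chain rule. Composing with \(K_\theta\) then preserves smoothness by differentiating under the integral, which is justified by dominated convergence using the bounded derivatives.

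In the finite-state case, which is the one used later in the paper, the argument is simpler. The kernels are stochastic matrices with entries smooth in \(\theta\). The composite is a product of matrices with smooth entries, so it is smooth automatically.

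Finally, the regularity stratum (item 5 of the object definition) is a property of objects, not of morphisms. It therefore imposes no further conditions on identities or composites.
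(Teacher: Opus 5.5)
Your proof is correct and follows the paper's argument essentially verbatim: the same identity triple, the same reindexed composite \(\theta\mapsto L_{f(\theta)}\circ K_\theta\), the same two-step chains verifying (S1) and (S2), and componentwise associativity and unit laws. Your extra paragraph on smoothness of the composite kernel family covers a point the paper passes over silently, with one caveat: differentiating under the integral only handles the \(\theta\)-dependence of the integrand \(L_{f(\theta)}(\varphi\mid y)\), so the \(\theta\)-dependence of the measure \(K_\theta(dy\mid x)\) still needs a product-rule argument (for example, using that the derivative functionals \(\varphi\mapsto\partial_\theta K_\theta(\varphi\mid x)\) are uniformly bounded in operator norm).
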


\begin{proof}
The identity on \(A\) is
\((\id_\Theta,\id_X,\id_S)\).  Given
\[
  (f,K,h):A\to B,
  \qquad
  (g,L,k):B\to C,
\]
define their composite by
\[
  (g,L,k)\circ(f,K,h)
  =
  \bigl(g\circ f,\,
        \theta\mapsto L_{f(\theta)}\circ K_\theta,\,
        k\circ h\bigr).
\]
The pushforward equation follows from
\[
  L_{f(\theta)}K_\theta p_\theta
  =
  L_{f(\theta)}q_{f(\theta)}
  =
  r_{g(f(\theta))}.
\]
For the statistic equation,
\[
  t''L_{f(\theta)}K_\theta
  =
  k\,t'K_\theta
  =
  kh\,t.
\]
Associativity follows from associativity of smooth maps and composition of
Markov kernels; the displayed identities act as left and right units.
\end{proof}

There is a forgetful functor
\[
  U:\StatInf\longrightarrow\mathbf{Smooth},
  \qquad
  U(\Theta,X,S,p,t)=\Theta,
  \qquad
  U(f,K,h)=f.
\]
Composing \(U\) with the ordinary tangent functor gives the parameter-space
tangent bundle \(T\Theta\) and differential \(Tf\).  This is the tangent
semantics used for intervention fields.

It is important that \(T\circ U\) is a functor
\(\StatInf\to\mathbf{Smooth}\), not automatically an endofunctor on
\(\StatInf\).  The manifold \(T\Theta\) does not by itself specify a normalized
statistical model on \(X\), a sufficient statistic, or a lifted kernel for each
morphism.  Constructing those data coherently would be the additional step
needed to make \(\StatInf\) a tangent category.  Similarly, the sample and
statistic spaces live in an ambient Markov category whose copy/discard maps are
available there; this alone does not make \(\StatInf\) itself a Markov category.

\subsection{Tangent categories}

A tangent category is a category \(\mathcal X\) equipped with a tangent functor
\(T:\mathcal X\to\mathcal X\), natural transformations playing the roles of
bundle projection, zero, fiberwise addition, vertical lift, and canonical flip,
and specified pullbacks satisfying the tangent-category axioms
\citep{rosicky1984abstract,cockett2014differential}.  In compressed notation,
the principal structure maps are
\[
  p:T\Rightarrow\mathrm{Id},
  \qquad
  0:\mathrm{Id}\Rightarrow T,
  \qquad
  +:T\times_{\mathcal X}T\Rightarrow T,
  \qquad
  \ell:T\Rightarrow T^2,
  \qquad
  c:T^2\Rightarrow T^2.
\]
The category of finite-dimensional smooth manifolds, with its ordinary tangent
bundle, is the motivating example.

This language clarifies what it would mean to elevate the compatibility tests
to a categorical differential calculus: copying, discarding, tensor products,
and stochastic composition would have to interact coherently with \(T\) and
its structure maps.  Establishing those axioms for a category of statistical
models or Markov kernels is a substantive construction.

The present paper uses a more concrete and sufficient form of tangent
semantics:
\begin{enumerate}[leftmargin=*,itemsep=2pt]
  \item a regular statistical family has a smooth parameter manifold
        \(\M\), whose ordinary tangent bundle contains intervention fields;
  \item the score map embeds each \(T_\theta\M\) into the finite-dimensional
        score span in \(L_0^2(P_\theta)\); and
  \item a differentiable path of finite-state kernels is linearized in the
        ambient vector space of signed kernels.
\end{enumerate}
These constructions are instances of familiar differentiation, but they do not
by themselves define a tangent functor on the entire Markov category.  This
scoping is important when interpreting the tangent-level analogy with
do-calculus in \cref{sec:tests}.

\section{Basic Statistical Examples}
\label{sec:basic-examples}

The categorical background becomes concrete in regular statistical families.
The following examples show how sufficient statistics, score tangent spaces,
and intervention fields fit together.  They are illustrative model
calculations; causal conclusions require the additional assumptions introduced
later.

\subsection{Gaussian location family}

Let \(\M=\R\) parameterize the unit-variance Gaussian family
\[
  p_\mu(x)
  =
  \frac{1}{\sqrt{2\pi}}
  \exp\!\left[-\frac12(x-\mu)^2\right].
\]
This is a one-dimensional exponential family with sufficient statistic
\(t(x)=x\).  Once observed, the statistic is classical data and may be copied
by \(\Delta(x)=(x,x)\).  The score is
\[
  \partial_\mu\log p_\mu(x)=x-\mu,
\]
so \(T_\mu\M\) is represented by the span of the centered observation
\(x-\mu\).

For a fixed target \(a\), consider the smooth protocol
\[
  \Phi_a(\alpha,\mu)=(1-\alpha)\mu+\alpha a.
\]
Its intervention field is
\[
  v_a(\mu)=(a-\mu)\partial_\mu.
\]
For targets \(a\) and \(b\),
\[
  [v_a,v_b]=(b-a)\partial_\mu.
\]
The bracket is generally nonzero, so the two target-seeking flows do not
commute.  Nevertheless, it remains in the one-dimensional visible tangent
space.  This example separates noncommutativity from nonintegrability: the
visible distribution has rank one and is automatically involutive wherever its
generator is nonzero.

\subsection{Binomial family}

Let \(\M=(0,1)\), let \(X=\{0,1\}^n\), and consider \(n\) independent Bernoulli
trials:
\[
  p_\theta(x)
  =
  \theta^{\sum_i x_i}(1-\theta)^{n-\sum_i x_i}.
\]
The count
\[
  t(x)=\sum_{i=1}^n x_i\in\{0,\ldots,n\}
\]
is sufficient, and its realized value is copied by
\(\Delta(k)=(k,k)\).  The score is
\[
  \partial_\theta\log p_\theta(x)
  =
  \frac{t(x)-n\theta}{\theta(1-\theta)}.
\]
Hence the entire parameter tangent space is generated by the centered count.

A protocol increasing the success probability toward one is
\[
  \Phi_+(\alpha,\theta)=\theta+\alpha(1-\theta),
  \qquad
  v_+(\theta)=(1-\theta)\partial_\theta.
\]
Similarly, a protocol decreasing it toward zero has
\[
  v_-(\theta)=-\theta\partial_\theta.
\]
Their bracket is
\[
  [v_+,v_-]=-\partial_\theta,
\]
which again remains in the one-dimensional tangent space.  The calculation
illustrates that competing interventions can have an order effect even in a
simple independent model.  It does not signal latent structure.

\subsection{Multivariate Gaussian family}

Let
\[
  \M=\R^d\times\mathrm{PD}_d
\]
parameterize \(N(\mu,\Sigma)\).  A sufficient statistic is
\[
  t(x)=(x,xx^\top)\in\R^d\times\mathrm{Sym}_d.
\]
In natural coordinates the corresponding parameters are
\(\Sigma^{-1}\mu\) and \(-\tfrac12\Sigma^{-1}\).  The first- and second-order
statistics generate the score tangent space, and the deterministic statistic
map satisfies
\[
  \Delta\,t=(t\otimes t)\Delta.
\]

Simple mean translations generate commuting fields
\[
  u_i=\partial_{\mu_i},
  \qquad [u_i,u_j]=0.
\]
More general protocols may alter both means and covariances.  For example,
\[
  v_i
  =
  f_i(\mu,\Sigma)\partial_{\mu_i}
  +\sum_{k\leq\ell}
    g^{k\ell}_i(\mu,\Sigma)\partial_{\Sigma_{k\ell}}.
\]
Their brackets are computed from derivatives of the coefficient functions
\(f_i\) and \(g_i^{k\ell}\).  Whether a bracket leaves a proposed visible span
is therefore a property of the specified protocols and span; it is not
determined by Gaussianity or by an off-diagonal covariance alone.

\subsection{Exchangeable data and de Finetti mixtures}

Let \(X_1,\ldots,X_n\) be conditionally i.i.d.\ given a mixing parameter
\(\theta\):
\[
  P(x_1,\ldots,x_n)
  =
  \int_\Theta\prod_{i=1}^n p_\theta(x_i)\,d\mu(\theta).
\]
This representation underlies causal and interventional extensions of de
Finetti theory \citep{guo2022causaldefinetti,guo2024dofinetti}.  The empirical
measure
\[
  \widehat P_n=\frac1n\sum_{i=1}^n\delta_{X_i}
\]
records the unordered sample and is a natural deterministic summary.  In an
exponential-family component model, the corresponding empirical sufficient
statistics generate the score directions for \(\theta\).

Conditional on \(\theta\), a componentwise intervention protocol can be
studied on the parameter manifold of \(p_\theta\).  After mixing over
\(\theta\), its observable derivative also contains the response of the
integral:
\[
  \left.\frac{d}{d\alpha}\right|_0
  \int_\Theta
  p_{\theta,\alpha}(x_1,\ldots,x_n)\,d\mu_\alpha(\theta).
\]
The derivative may include both changes of the component law and changes of
the mixing distribution.  A visible model that represents only the former can
therefore be incomplete.  Whether that incompleteness produces a normal
Lie-bracket residual depends on the chosen intervention fields and visible
span; the mixture representation alone does not determine the answer.

Across these examples, sufficient statistics identify deterministic,
copyable summaries; scores identify tangent directions; and brackets describe
the order dependence of smooth intervention protocols.  The next section
formalizes these intervention fields without requiring that every categorical
object admit a tangent lift.

\section{Statistical Models and Smooth Intervention Protocols}
\label{sec:protocols}

\subsection{Regular statistical models}

Let \(\M\) be a \(q\)-dimensional smooth statistical model
\(\theta\mapsto P_\theta\).  For a dominated regular model with density
\(p_\theta\), the score map sends
\[
  a=\sum_{k=1}^q a^k\partial_k\in T_\theta\M
  \quad\longmapsto\quad
  s_a(x)=\sum_{k=1}^q a^k\partial_k\log p_\theta(x).
\]
If the Fisher information is positive definite, this map is injective and
identifies \(T_\theta\M\) with the finite-dimensional score span
\[
  \mathcal T_\theta
  =
  \Span\{\partial_1\log p_\theta,\ldots,\partial_q\log p_\theta\}
  \subseteq L_0^2(P_\theta),
\]
not with the entire space \(L_0^2(P_\theta)\) of mean-zero square-integrable
functions.  The Fisher metric is
\[
  g_F(a,b)=\mathbb E_{P_\theta}[s_a s_b].
\]
Our differential-geometric results need only a smooth manifold and a chosen
Riemannian metric; the Fisher metric supplies the natural choice in a regular
statistical model \citep{amari2016information}.

\subsection{Intervention protocols}

\begin{definition}[Smooth intervention protocol]
\label{def:protocol}
A smooth local intervention protocol for coordinate or mechanism \(i\) is a map
\[
  \Phi_i:(-\eps,\eps)\times U\longrightarrow\M,
  \qquad
  \Phi_i(0,p)=p,
\]
defined on an open set \(U\subseteq\M\).  Its infinitesimal intervention field
is
\[
  v_i(p)=
  \left.\frac{\partial}{\partial\alpha}\right|_{\alpha=0}
  \Phi_i(\alpha,p)\in T_p\M .
\]
\end{definition}

The parameter \(\alpha\) measures intervention strength around the
observational model.  It is not the value assigned by a hard intervention
\(\doop(X_i=x)\).  A family \(x\mapsto P_{\doop(X_i=x)}\) can generate such a
protocol only after one specifies how the hard-intervention family is connected
smoothly to the observational model.

Different protocols aimed at the same substantive variable can produce
different fields.  Reparameterizing the statistical manifold pushes a fixed
protocol forward, but changing how the intervention is implemented changes the
geometric object itself.

\begin{definition}[Visible intervention distribution]
\label{def:distribution}
For fields \(v_1,\ldots,v_m\), the visible intervention distribution is
\[
  \Dvis(p)=\Span\{v_1(p),\ldots,v_m(p)\}\subseteq T_p\M .
\]
We work on an open stratum on which \(\Dvis\) has constant rank \(r\).  A local
frame \(w_1,\ldots,w_r\) may be selected from smooth combinations of the
generating fields.
\end{definition}

\section{Lie-Bracket Residuals and Integrability}
\label{sec:residuals}

\subsection{Normal bracket residual}

Let \(g\) be a Riemannian metric on \(\M\), and let
\(\Pvis:T\M\to\Dvis\) denote the orthogonal projection on a constant-rank
stratum.

\begin{definition}[Visible bracket residual]
For local sections \(u,v\in\Gamma(\Dvis)\), define
\[
  r(u,v)
  =
  (I-\Pvis)[u,v]\in\Gamma(\Dvis^\perp).
\]
For a chosen frame, write \(r_{ab}=r(w_a,w_b)\).
\end{definition}

The projection makes the residual numerically measurable.  Its \emph{vanishing}
is intrinsic to the distribution, while its norm depends on \(g\).

\begin{proposition}[Residual criterion]
\label{prop:residual-criterion}
Let \(w_1,\ldots,w_r\) be a local frame for a constant-rank distribution
\(\Dvis\).  The following are equivalent:
\begin{enumerate}[label=(\roman*),itemsep=1pt]
  \item \(r_{ab}=0\) for every \(a,b\);
  \item \([w_a,w_b]\in\Gamma(\Dvis)\) for every \(a,b\);
  \item \(\Dvis\) is involutive.
\end{enumerate}
\end{proposition}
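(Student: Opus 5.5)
The plan is to prove the cycle (i)\(\Leftrightarrow\)(ii)\(\Leftrightarrow\)(iii), working throughout on the constant-rank stratum where \(\Pvis\) is a smooth bundle map. Orthogonality of \(\Pvis\) enters only in (i)\(\Leftrightarrow\)(ii); the equivalence (ii)\(\Leftrightarrow\)(iii) is purely about the distribution and the Lie algebra of vector fields.

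For (i)\(\Leftrightarrow\)(ii), I would work pointwise. At each \(p\), \(T_p\M=\Dvis(p)\oplus\Dvis(p)^\perp\) with respect to \(g\), and \(I-\Pvis\) is the projection onto the second summand along the first. Its kernel is therefore exactly \(\Dvis(p)\). Hence \(r_{ab}(p)=0\) if and only if \([w_a,w_b](p)\in\Dvis(p)\). A smooth vector field taking values in \(\Dvis\) at every point is a smooth section of \(\Dvis\), since \(\Dvis\) is a smooth subbundle under constant rank. So (i) and (ii) agree.

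For (ii)\(\Leftrightarrow\)(iii), the direction (iii)\(\Rightarrow\)(ii) is immediate, because the frame fields are sections of \(\Dvis\). For (ii)\(\Rightarrow\)(iii), take arbitrary local sections \(u=\sum_a f^a w_a\) and \(v=\sum_b h^b w_b\) with smooth coefficients. The coefficients are smooth because \(w_1,\ldots,w_r\) is a local frame of a constant-rank subbundle; one can solve for them using a locally invertible Gram matrix \(g(w_a,w_b)\). Expanding by the Leibniz rule gives
\[
[u,v]=\sum_{a,b}\bigl(f^a h^b[w_a,w_b]+f^a\,w_a(h^b)\,w_b-h^b\,w_b(f^a)\,w_a\bigr).
\]
Every term lies in \(\Gamma(\Dvis)\) by (ii). If sections are only defined on overlapping frame domains, the statement is local, so this suffices.

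The main obstacle is modest: justifying the smoothness of the coefficients and of \(\Pvis\). I would handle both with the constant-rank hypothesis of Definition~\ref{def:distribution}, under which the Gram matrix of the frame is smooth and invertible, so \(\Pvis=\sum_{a,b} (G^{-1})^{ab} g(\cdot,w_b)\,w_a\) is smooth. Finally, I would note that the equivalence of (i) with (iii) shows the vanishing of the residual is independent of both the choice of \(g\) and the choice of frame, as claimed before the statement.
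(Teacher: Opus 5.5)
Your proposal is correct and follows the paper's proof essentially step for step: (i)\(\Leftrightarrow\)(ii) because the kernel of the orthogonal projection \(I-\Pvis\) is \(\Dvis\), (ii)\(\Rightarrow\)(iii) by the same Leibniz expansion of \([u,v]\) in the frame, and (iii)\(\Rightarrow\)(ii) by applying involutivity to the frame fields. Your Gram-matrix argument for the smoothness of the coefficients and of \(\Pvis\) spells out a detail that the paper leaves implicit.
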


\begin{proof}
The first two statements are equivalent by the definition of orthogonal
projection.  If the frame brackets lie in \(\Dvis\), then for
\(u=\sum_a f_aw_a\) and \(v=\sum_b g_bw_b\), the Leibniz rule gives
\[
 [u,v]
 =
 \sum_{a,b}f_ag_b[w_a,w_b]
 +\sum_{a,b}f_aw_a(g_b)w_b
 -\sum_{a,b}g_bw_b(f_a)w_a,
\]
which is again a section of \(\Dvis\).  Thus \(\Dvis\) is involutive.  The
converse follows by applying involutivity to the frame sections.
\end{proof}

\begin{corollary}[Frobenius integrability]
\label{cor:frobenius}
On a constant-rank stratum, all visible bracket residuals vanish if and only if
the visible intervention distribution is locally tangent to a foliation.
\end{corollary}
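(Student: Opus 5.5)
The corollary follows by composing \cref{prop:residual-criterion} with the classical Frobenius theorem. The plan is to establish the two directions separately on a constant-rank stratum \(U\subseteq\M\), where \(\Dvis\) is a smooth rank-\(r\) subbundle of \(T\M|_U\). Smoothness holds because, near any point, a local frame \(w_1,\ldots,w_r\) can be chosen from smooth combinations of the generating fields \(v_1,\ldots,v_m\), as allowed by \cref{def:distribution}. This smoothness is needed before Frobenius applies, so I will state it explicitly. At a point \(p\), pick \(r\) generators whose values at \(p\) are linearly independent. Independence persists on a neighborhood by continuity of the determinant of a suitable minor. Constant rank then forces these \(r\) fields to span \(\Dvis\) there.

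For the forward direction, assume all visible bracket residuals vanish. Applying \cref{prop:residual-criterion} to every local frame, (i)\(\Rightarrow\)(iii) shows that \(\Dvis\) is involutive on each frame domain. Involutivity is a local property, so \(\Dvis\) is involutive on the whole stratum. The classical Frobenius theorem then supplies, around each point, a flat chart \((x^1,\ldots,x^q)\) in which \(\Dvis=\Span\{\partial_{x^1},\ldots,\partial_{x^r}\}\). The global version supplies a foliation of \(U\) by maximal integral manifolds, so \(\Dvis\) is locally tangent to a foliation.

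For the converse, suppose \(\Dvis\) is tangent to a foliation, so that locally \(\Dvis=\Span\{\partial_{x^1},\ldots,\partial_{x^r}\}\) in a foliated chart. Any two sections \(u=\sum f_a\partial_{x^a}\) and \(v=\sum g_b\partial_{x^b}\) have a bracket that, by the Leibniz expansion from the proof of \cref{prop:residual-criterion}, is again a combination of the \(\partial_{x^a}\) with \(a\le r\), because coordinate fields commute. Equivalently, the brackets of vector fields tangent to the submanifold leaves are tangent to those leaves. So \(\Dvis\) is involutive, and (iii)\(\Rightarrow\)(i) gives \(r_{ab}=0\) for every frame. Since \(r(u,v)\) vanishes exactly when \([u,v]\in\Dvis\), the residuals of arbitrary sections vanish as well. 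None of this depends on \(g\), consistent with the remark that vanishing is intrinsic.

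The only substantive step is the appeal to the Frobenius theorem itself, which I will cite rather than reprove. The point needing care is the interface between the two results. \Cref{prop:residual-criterion} is stated for a fixed local frame, while "all residuals vanish" and "tangent to a foliation" are statements about the whole stratum. I will bridge this by covering the stratum with frame domains and using that both involutivity and the existence of integral manifolds are local conditions.
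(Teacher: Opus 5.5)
Your proposal is correct and takes the same route as the paper, which simply combines Proposition~\ref{prop:residual-criterion} with the classical Frobenius theorem. Your additional steps are sound elaborations of that one-line citation: the smoothness of \(\Dvis\) on a constant-rank stratum, patching over frame domains, and the direct argument that a foliation's tangent distribution is involutive.
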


\begin{proof}
Combine Proposition~\ref{prop:residual-criterion} with the classical Frobenius theorem
\citep{lee2012smooth}.
\end{proof}

This is the precise ``Frobenius'' statement used in this paper.  It is the
differential-geometric Frobenius theorem, not an acyclicity theorem and not a
criterion for causal sufficiency.

\subsection{Invariance and dependence on choices}

\begin{proposition}[Coordinate invariance]
\label{prop:coordinate}
Let \(F:\M\to\mathcal N\) be a diffeomorphism.  Push forward both the
intervention fields and their distribution:
\(\widetilde v_i=F_*v_i\) and
\(\widetilde{\Dvis}=F_*\Dvis\).  Then \(\Dvis\) is involutive if and only if
\(\widetilde{\Dvis}\) is involutive.  Equivalently, the zero-residual property is
preserved when the metric on \(\mathcal N\) is the pushforward metric.
\end{proposition}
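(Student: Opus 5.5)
The argument rests on naturality of the Lie bracket under diffeomorphisms: for smooth vector fields \(u,v\) on \(\M\) and a diffeomorphism \(F:\M\to\mathcal N\),
\[
  F_*[u,v]=[F_*u,F_*v].
\]
I would verify this by evaluating both sides on a test function \(h\in C^\infty(\mathcal N)\). Using \((F_*u)(h)\circ F=u(h\circ F)\), both sides give \(u(v(h\circ F))-v(u(h\circ F))\) after composing with \(F\). Since \(F\) is a diffeomorphism, \(F_*\) is a linear isomorphism \(T_p\M\to T_{F(p)}\mathcal N\) at each point. Hence \(\widetilde{\Dvis}=F_*\Dvis\) is a smooth distribution of the same constant rank \(r\), spanned by \(\widetilde v_i=F_*v_i\). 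A local frame \(w_1,\ldots,w_r\) of \(\Dvis\) pushes forward to a local frame \(F_*w_1,\ldots,F_*w_r\) of \(\widetilde{\Dvis}\). Every local section of \(\widetilde{\Dvis}\) has the form \(F_*u\) with \(u\in\Gamma(\Dvis)\), namely \(u=(F^{-1})_*\widetilde u\).

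For the involutivity equivalence, suppose \(\Dvis\) is involutive and take \(\widetilde u,\widetilde v\in\Gamma(\widetilde{\Dvis})\). Write them as \(F_*u\) and \(F_*v\). Then \([\widetilde u,\widetilde v]=F_*[u,v]\), and since \([u,v]\in\Gamma(\Dvis)\), its pushforward lies in \(F_*\Dvis=\widetilde{\Dvis}\). The converse follows by the same argument applied to \(F^{-1}\). Alternatively, one can apply \cref{prop:residual-criterion}(ii) to the pushed frame, where \([F_*w_a,F_*w_b]=F_*[w_a,w_b]\).

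For the residual formulation, equip \(\mathcal N\) with \(\widetilde g=(F^{-1})^*g\). Then \(F_*\) is a pointwise isometry carrying \(\Dvis\) onto \(\widetilde{\Dvis}\), and therefore carries \(\Dvis^\perp\) onto \(\widetilde{\Dvis}^\perp\). Consequently \(\widetilde\Pi_{\mathrm{vis}}\circ F_*=F_*\circ\Pvis\), which gives
\[
  \widetilde r(F_*w_a,F_*w_b)=F_*\,r(w_a,w_b).
\]
Since \(F_*\) is injective, \(\widetilde r_{ab}=0\) if and only if \(r_{ab}=0\); with the pushforward metric the residual norms are in fact equal. I would also note that, by \cref{prop:residual-criterion}, vanishing of the residuals is equivalent to involutivity under any metric. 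The pushforward metric is needed only for the stronger equivariance of the residual itself and its norm.

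The main obstacle is bookkeeping rather than substance. The naturality identity requires \(F\) to be a diffeomorphism, or at least the fields to be \(F\)-related, so that \(F_*u\) is a well-defined vector field. I would handle this by restricting to the open constant-rank stratum and its image under \(F\).
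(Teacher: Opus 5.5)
Your proof is correct and follows the paper's route: naturality of the Lie bracket, $[F_*u,F_*v]=F_*[u,v]$, together with invertibility of $F_*$, so that bracket closure is both preserved and reflected. Your extra step, showing that the pushforward metric makes $F_*$ intertwine the orthogonal projections so that $\widetilde r(F_*w_a,F_*w_b)=F_*\,r(w_a,w_b)$, makes explicit what the paper's one-line argument leaves implicit.
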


\begin{proof}
Naturality of the Lie bracket gives
\([F_*u,F_*v]=F_*[u,v]\).  Hence bracket closure is preserved and reflected by
the invertible differential \(F_*\).
\end{proof}

\begin{proposition}[Frame invariance]
Replacing a local frame \(w_a\) by another smooth local frame of the same
distribution does not change whether every normal residual vanishes.
\end{proposition}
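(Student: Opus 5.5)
The plan is to reduce the statement to \cref{prop:residual-criterion}, which makes ``every normal residual vanishes'' a property of the distribution $\Dvis$ alone rather than of the frame. Given two smooth local frames $w_1,\ldots,w_r$ and $w'_1,\ldots,w'_r$ of the same constant-rank distribution on a common open set, the proposition says the vanishing of all $r_{ab}=(I-\Pvis)[w_a,w_b]$ is equivalent to involutivity of $\Dvis$. The same holds for the $r'_{ab}=(I-\Pvis)[w'_a,w'_b]$. Since involutivity mentions no frame, the two vanishing conditions are equivalent. The projection $\Pvis$ is determined by $\Dvis$ and $g$, so it is the same operator in both cases.

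For transparency I would also give the direct computation, which avoids quoting the proposition. Write $w'_a=\sum_c A_a^c w_c$ with $A=(A_a^c)$ a smooth, pointwise invertible matrix of functions. The Leibniz expansion already used in the proof of \cref{prop:residual-criterion} gives
\[
  [w'_a,w'_b]=\sum_{c,d}A_a^cA_b^d[w_c,w_d]
  +\sum_{c,d}A_a^c\,w_c(A_b^d)\,w_d
  -\sum_{c,d}A_b^d\,w_d(A_a^c)\,w_c .
\]
The last two sums lie in $\Gamma(\Dvis)$, so they are annihilated by $I-\Pvis$. Because $I-\Pvis$ is $C^\infty$-linear, this yields the tensorial transformation rule
\[
  r'_{ab}=\sum_{c,d}A_a^cA_b^d\,r_{cd}.
\]
If all $r_{cd}$ vanish, then all $r'_{ab}$ vanish. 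Applying the same argument with $A^{-1}$ gives the converse, so the vanishing condition is frame-independent.

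The main point needing care, rather than a real obstacle, is domains. The two frames may be defined on different open sets, and the claim concerns vanishing on their common domain of definition. Both frames and the change-of-basis matrix $A$ are smooth there because $\Dvis$ has constant rank on the stratum. If the claim is meant globally with a different frame on each patch, I would argue pointwise: $r$ is a tensor, so vanishing near each point is checked in any one frame there. I would close by remarking that the norms $\|r_{ab}\|_g$ do change under a change of frame through the factors $A$, consistent with the earlier remark that only the vanishing is intrinsic.
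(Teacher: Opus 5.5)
Your proposal is correct, and its first paragraph is exactly the paper's proof. By Proposition~\ref{prop:residual-criterion}, vanishing of all frame residuals is equivalent to involutivity of \(\Dvis\), which refers to no frame. Your supplementary computation \(r'_{ab}=\sum_{c,d}A_a^cA_b^d\,r_{cd}\) is also correct. It gives a slightly stronger, pointwise conclusion that the paper's one-line argument does not state: the residual is tensorial, so vanishing at a single point is already frame-independent.
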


\begin{proof}
The condition is equivalent to involutivity by
Proposition~\ref{prop:residual-criterion}, and involutivity depends only on the
distribution.
\end{proof}

The magnitude \(\|r_{ab}\|_g\), the coefficients of the tangential projection,
and a finite-sample threshold are not invariant under arbitrary changes of
metric, scaling of protocols, or replacement of the visible span.  Those
choices must be part of any empirical specification.

\begin{definition}[Separated stratum]
A constant-rank stratum is \(\gamma\)-separated for a chosen metric, frame, and
protocol normalization if the frame Gram matrix has smallest eigenvalue at
least \(\gamma>0\) and each tested residual satisfies either
\(\|r_{ab}\|_g=0\) or \(\|r_{ab}\|_g\ge\gamma\).
\end{definition}

This is an estimation margin assumption.  It is not implied by regularity of
the statistical model.

\section{Examples and Counterexamples for Bracket Residuals}
\label{sec:examples}

\begin{example}[Commuting coordinate shifts]
On a product location model with parameter \((\mu_1,\ldots,\mu_d)\), translation
protocols generate \(v_i=\partial_{\mu_i}\).  All brackets vanish.  This example
is useful for checking an estimator, but bracket vanishing alone says nothing
about whether the observational variables are confounded.
\end{example}

\begin{example}[Residual without a latent variable]
\label{ex:no-latent-residual}
On the fully observed manifold \(\R^3\) with coordinates \((x,y,z)\), take
\[
  v_1=\partial_x+y\partial_z,
  \qquad
  v_2=\partial_y .
\]
Then
\[
  [v_1,v_2]=-\partial_z,
\]
which is not in \(\Span\{v_1,v_2\}\).  The visible rank-two distribution has a
nonzero normal residual even though no latent variable has been introduced.
The residual records nonintegrability of the chosen protocols, not the cause of
that nonintegrability.
\end{example}

\begin{example}[Latent structure with commuting visible fields]
Let a latent variable \(H\) affect the observational law of visible variables
\(X\) and \(Y\).  Suppose the specified interventions act on a two-dimensional
visible location parameter by translations
\[
  v_X=\partial_{\mu_X},
  \qquad
  v_Y=\partial_{\mu_Y}.
\]
These fields commute regardless of the latent contribution to the baseline
covariance.  Hence a hidden common cause can coexist with zero visible bracket
residual.  Detecting it requires assumptions linking the latent mechanism to
the intervention responses.
\end{example}

The two counterexamples establish that, in the unrestricted framework,
\[
  r_{ij}\ne0
  \quad\not\Longleftrightarrow\quad
  \text{latent confounding}.
\]
A valid detection theorem must specify a structural model class, intervention
completeness, the visible span, and a separation condition under which latent
structure is the only possible source of a normal bracket component.

\section{Copy and Discard in a Linearized Markov Setting}
\label{sec:markov}

\subsection{Why two tangent levels must be separated}

A field \(v_i\in\Gamma(T\M)\) is a tangent vector to a family of probability
laws.  A copy map \(\Delta_X:X\to X\otimes X\) and discard map
\(!_{X}:X\to I\) live in a Markov category.  Expressions such as
\(\Delta_X\circ v_i\) are therefore not automatically typed: one needs a
realization of the parameter tangent as a derivative of morphisms on the sample
object.

We make this explicit in the finite-state case.  Let \(\R^X\) be the vector
space of signed measures on a finite set \(X\).  A Markov kernel is a stochastic
linear map \(K:\R^X\to\R^Y\).  The discard functional
\(!_{X}:\R^X\to\R\) sums total mass, and the copy map is
\[
  \Delta_X e_x=e_{(x,x)}
\]
on basis distributions.

\begin{definition}[Kernel derivative]
Let \(K_\alpha:\R^X\to\R^Y\) be a differentiable path of stochastic kernels
with \(K_0=K\).  Its derivative is the signed linear map
\[
  D=\left.\frac{d}{d\alpha}\right|_{\alpha=0}K_\alpha .
\]
The map \(D\) need not be stochastic.
\end{definition}

\begin{proposition}[Infinitesimal normalization]
\label{prop:normalization}
Every differentiable path of stochastic kernels satisfies
\[
  !_{Y}\circ D=0 .
\]
\end{proposition}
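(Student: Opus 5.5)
The plan is to differentiate the normalization identity that every stochastic kernel in the path satisfies. In the finite-state linearization, a linear map \(K:\R^X\to\R^Y\) is stochastic exactly when it sends basis distributions \(e_x\) to probability vectors. In particular, it preserves total mass, which in operator form reads
\[
  !_{Y}\circ K_\alpha = !_{X}
  \qquad\text{for every }\alpha .
\]
The first step is to record this identity for the whole path \(K_\alpha\). To check it, one verifies it on the basis vectors \(e_x\), where it says each column of \(K_\alpha\) sums to one, and then extends by linearity to all signed measures.

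The second step is to differentiate this identity at \(\alpha=0\). The discard map \(!_{Y}\) is a fixed linear functional that does not depend on \(\alpha\). Post-composition with a fixed linear map is a continuous linear operation on the finite-dimensional space \(\operatorname{Hom}(\R^X,\R^Y)\), so it commutes with \(\frac{d}{d\alpha}\). This gives
\[
  !_{Y}\circ D
  =\left.\frac{d}{d\alpha}\right|_{\alpha=0}\bigl(!_{Y}\circ K_\alpha\bigr)
  =\left.\frac{d}{d\alpha}\right|_{\alpha=0}\, !_{X} = 0,
\]
because the right-hand side is constant in \(\alpha\). Equivalently, in matrix entries one has \(\sum_y (K_\alpha)_{yx}=1\) for all \(\alpha\) and \(x\), and differentiating term by term gives \(\sum_y D_{yx}=0\), so every column of \(D\) sums to zero.

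There is no real obstacle here. The only point needing care is justifying the interchange of the derivative with composition by \(!_{Y}\). This is immediate because everything is finite-dimensional and the sum over \(y\) is a finite sum. Positivity of the kernels plays no role, which explains why \(D\) can fail to be stochastic even though it is always mass-annihilating.
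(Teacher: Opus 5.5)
Your proof is correct and takes the same approach as the paper: record the stochasticity identity \(!_{Y}\circ K_\alpha=!_{X}\) for every \(\alpha\) and differentiate at \(\alpha=0\), using that \(!_{X}\) does not depend on \(\alpha\). Your remarks on commuting the derivative with the fixed functional \(!_{Y}\), and on reading the result as zero column sums of \(D\), spell out details the paper leaves implicit.
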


\begin{proof}
For every \(\alpha\), stochasticity gives
\(!_{Y}\circ K_\alpha=!_{X}\).  Differentiate at \(\alpha=0\).
\end{proof}

Thus first-order discard preservation expresses conservation of total
probability mass.  It is automatic for a valid stochastic path and does not
encode deletion of an irrelevant causal action.

\subsection{Copy compatibility}

Assume \(X=Y\), \(K_0=\id\), and consider a path of endomorphisms.  Exact
copy-preservation would mean
\[
  \Delta_XK_\alpha
  =
  (K_\alpha\otimes K_\alpha)\Delta_X .
\]
Differentiating at zero yields the coderivation identity
\[
  \Delta_XD
  =
  (D\otimes\id+\id\otimes D)\Delta_X .
\]

\begin{definition}[Copy defect]
The first-order copy defect of \(D\) is
\[
  B_D
  =
  \Delta_XD
  -(D\otimes\id+\id\otimes D)\Delta_X .
\]
\end{definition}

\begin{proposition}[Linearized copy criterion]
\label{prop:copy}
The path \(K_\alpha\) preserves copying to first order at zero if and only if
\(B_D=0\).
\end{proposition}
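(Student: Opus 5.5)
The plan is to make precise what ``preserves copying to first order at zero'' means and then differentiate. I will take it to mean that the copy-preservation defect
\[
  E(\alpha)=\Delta_XK_\alpha-(K_\alpha\otimes K_\alpha)\Delta_X
\]
satisfies \(E(0)=0\) and \(E'(0)=0\). Equivalently, \(E(\alpha)=o(\alpha)\) as a map \(\R^X\to\R^{X\times X}\). Since \(K_0=\id\) and \(\Delta_X\) is a fixed linear map, \(E(0)=\Delta_X-(\id\otimes\id)\Delta_X=0\) holds automatically. The criterion therefore reduces to computing \(E'(0)\) and identifying it with \(-B_D\) up to sign.

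The key step is differentiating the tensor-product term. Everything is finite-dimensional, so \(\alpha\mapsto K_\alpha\otimes K_\alpha\) is differentiable: in matrix entries it is a product of differentiable functions. The bilinearity of \(\otimes\) gives the Leibniz rule
\[
  \left.\tfrac{d}{d\alpha}\right|_0(K_\alpha\otimes K_\alpha)=D\otimes K_0+K_0\otimes D=D\otimes\id+\id\otimes D.
\]
I would verify this entrywise, using \((K_\alpha\otimes K_\alpha)_{(y,y'),(x,x')}=(K_\alpha)_{yx}(K_\alpha)_{y'x'}\) and the product rule. Composition with the constant linear map \(\Delta_X\) commutes with differentiation. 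Hence
\[
  E'(0)=\Delta_XD-(D\otimes\id+\id\otimes D)\Delta_X=B_D.
\]

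With this, both directions are immediate. If the path preserves copying to first order, then \(E'(0)=0\), so \(B_D=0\). Conversely, if \(B_D=0\), then \(E(0)=0\) and \(E'(0)=0\). Taylor's theorem in the finite-dimensional space of signed maps then gives \(E(\alpha)=o(\alpha)\), which is first-order copy preservation.

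The only real obstacle is definitional rather than computational: ``first order'' must be fixed as vanishing of the defect and its first derivative. The hypothesis \(K_0=\id\) is essential, since it kills the zeroth-order term and reduces \(D\otimes K_0+K_0\otimes D\) to the coderivation form. I would add a remark that without \(K_0=\id\) the correct linearized condition would instead involve \(D\otimes K_0+K_0\otimes D\) together with the exact identity at \(\alpha=0\).
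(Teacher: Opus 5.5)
Your proposal is correct and takes the same approach as the paper, whose proof is simply to differentiate the exact identity \(\Delta_XK_\alpha=(K_\alpha\otimes K_\alpha)\Delta_X\) at \(\alpha=0\). You go further by making ``first order'' precise as \(E(\alpha)=o(\alpha)\), noting that \(E(0)=0\) because \(K_0=\id\), and checking the Leibniz rule for \(K_\alpha\otimes K_\alpha\) entrywise. Two cosmetic points: your computation gives \(E'(0)=B_D\) exactly, so the earlier hedge ``\(-B_D\) up to sign'' can be dropped, and the converse needs only the definition of the derivative, not Taylor's theorem.
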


\begin{proof}
Differentiate the exact copy-preservation equation at \(\alpha=0\).
\end{proof}

This criterion is deliberately modest.  In a Markov category every object
carries copy and discard as a commutative comonoid
\citep{fritz2020synthetic}.  A full Frobenius algebra additionally requires
multiplication and unit maps satisfying further laws; it should not be inferred
from copy/discard alone.  Moreover, generic stochastic kernels do not preserve
copying: copy-preserving kernels are deterministic under standard hypotheses.
The defect \(B_D\) therefore measures first-order failure of deterministic
information flow; by itself, it establishes neither conditional independence
nor action--observation exchange.

\subsection{Compatibility with parameter-space fields}

To connect \(v_i\) from Definition~\ref{def:protocol} to a kernel derivative \(D_i\), one
must specify a differentiable realization
\[
  \rho_i:
  \text{intervention protocol on }\M
  \longrightarrow
  \text{path of kernels on }X .
\]
Only after this choice can one compare the geometric residuals
\((I-\Pvis)[v_i,v_j]\) with the linear defects \(B_{D_i}\).  Such a realization
may exist for a structural equation model or a controlled Markov process, but
it is not supplied by the tangent manifold alone.

\section{Three \texorpdfstring{\IC}{IC} Compatibility Tests}
\label{sec:tests}

The preceding constructions lead to three \IC\ compatibility tests:
\[
\begin{array}{rcll}
\text{normalization}      &:& !\circ D_i=0,\\[1mm]
\text{copy compatibility}&:& B_{D_i}=0,\\[1mm]
\text{visible involutivity}&:&
  (I-\Pvis)[v_i,v_j]=0\quad\text{for all }i,j.
\end{array}
\]
They answer different questions.  The first asks whether an estimated
first-order perturbation remains inside normalized probability laws.  The
second asks whether a kernel perturbation preserves deterministic copying to
first order.  The third asks whether successive local intervention flows remain
inside the distribution generated by the modeled interventions.  This section
develops each test and then explains the precise sense in which the first two
suggest tangent-level analogies with operations in do-calculus.

\begin{figure}[H]
\centering
\begin{tikzpicture}[
  x=1cm,y=1cm,
  wire/.style={-{Latex[length=2mm]},semithick},
  plainwire/.style={semithick},
  op/.style={draw,rounded corners=1pt,minimum width=0.9cm,
             minimum height=0.55cm,fill=white},
  smallop/.style={draw,rounded corners=1pt,minimum width=0.65cm,
                  minimum height=0.48cm,fill=white},
  every node/.style={font=\small}
]
% Panel (a): localized marginal invariance
\node[anchor=east,font=\small\bfseries] at (0.2,4.8) {(a)};
\coordinate (a0) at (0.7,4.8);
\node[op] (aD) at (2.0,4.8) {\(D_i\)};
\node[op] (am) at (3.7,4.8) {\(m\)};
\node (az) at (5.0,4.8) {\(0\)};
\draw[wire] (a0) -- (aD);
\draw[wire] (aD) -- (am);
\draw[wire] (am) -- (az);
\node[anchor=west] at (5.7,4.8)
  {\(mD_i=0\quad\) (take \(m={!_Y}\) for normalization)};

% Panel (b): coderivation/copy compatibility
\node[anchor=east,font=\small\bfseries] at (0.2,2.6) {(b)};
\coordinate (b0) at (0.7,2.6);
\node[smallop] (bD) at (1.7,2.6) {\(D_i\)};
\fill (2.75,2.6) circle (1.8pt);
\draw[wire] (b0) -- (bD);
\draw[plainwire] (bD) -- (2.75,2.6);
\draw[wire] (2.75,2.6) -- (3.65,3.0);
\draw[wire] (2.75,2.6) -- (3.65,2.2);
\node at (4.25,2.6) {\(=\)};

\coordinate (c10) at (4.75,2.6);
\fill (5.55,2.6) circle (1.8pt);
\node[smallop] (c1D) at (6.65,3.0) {\(D_i\)};
\draw[plainwire] (c10) -- (5.55,2.6);
\draw[wire] (5.55,2.6) -- (c1D);
\draw[wire] (5.55,2.6) -- (7.55,2.2);
\draw[wire] (c1D) -- (7.55,3.0);
\node at (8.05,2.6) {\(+\)};

\coordinate (c20) at (8.55,2.6);
\fill (9.35,2.6) circle (1.8pt);
\node[smallop] (c2D) at (10.45,2.2) {\(D_i\)};
\draw[plainwire] (c20) -- (9.35,2.6);
\draw[wire] (9.35,2.6) -- (11.35,3.0);
\draw[wire] (9.35,2.6) -- (c2D);
\draw[wire] (c2D) -- (11.35,2.2);
\node[anchor=west] at (11.65,2.6) {\(B_{D_i}=0\)};

% Panel (c): normal bracket closure
\node[anchor=east,font=\small\bfseries] at (0.2,0.4) {(c)};
\coordinate (d0) at (0.7,0.4);
\node[op,minimum width=1.25cm] (bracket) at (2.0,0.4)
  {\([v_i,v_j]\)};
\node[op,minimum width=1.5cm] (proj) at (4.2,0.4)
  {\(I-\Pi_{\rm vis}\)};
\node (dz) at (5.65,0.4) {\(0\)};
\draw[wire] (d0) -- (bracket);
\draw[wire] (bracket) -- (proj);
\draw[wire] (proj) -- (dz);
\node[anchor=west] at (6.3,0.4)
  {closure of the sequential-intervention commutator};
\end{tikzpicture}
\caption{\textbf{String-diagram summary of the \IC\ compatibility
tests.}  (a) A selected marginal or readout annihilates the kernel derivative;
ordinary normalization is the special case \(m={!_Y}\).
(b) Perturbing and then copying equals the sum of copying first and perturbing
each output leg, which is the coderivation identity \(B_{D_i}=0\).
(c) The Lie bracket of two intervention fields has no component normal to the
visible distribution.  The diagrams express tangent-level compatibility; a
connection to finite do-calculus operations additionally requires the bridge
conditions described in \cref{sec:tests}.}
\label{fig:compatibility-diagrams}
\end{figure}

\subsection{Normalization and localized marginal invariance}

Let \(K_\alpha:X\rightsquigarrow Y\) be a differentiable path of stochastic
kernels with derivative \(D_i\) at \(\alpha=0\).  Since
\(!_{Y}\circ K_\alpha=!_{X}\) for every \(\alpha\), differentiation gives
\[
  !_{Y}\circ D_i=0.
\]
Thus the columns of \(D_i\) have total mass zero.  This is the kernel analogue
of the familiar fact that a score has expectation zero: an infinitesimal
probability perturbation may redistribute mass but cannot create or destroy
it.  In applications this identity is a basic validity check.  If an estimated
\(\widehat D_i\) has \(!_{Y}\widehat D_i\neq0\), it is not tangent to the
space of stochastic kernels, irrespective of its causal interpretation.

Global normalization should be distinguished from \emph{localized
irrelevance}.  Let
\[
  m:Y\rightsquigarrow Z
\]
be a specified marginalization or readout kernel.  The observable represented
by \(m\) is unchanged to first order along the intervention path precisely when
\[
  m\circ D_i=0.
\]
Indeed,
\[
  m\circ K_\alpha
  =
  m\circ K_0+\alpha(m\circ D_i)+o(\alpha),
\]
so the condition says that applying the intervention and then retaining the
\(Z\)-readout agrees, to first order, with the unperturbed readout.

\begin{proposition}[First-order marginal invariance]
\label{prop:marginal-invariance}
For a differentiable kernel path
\(K_\alpha=K_0+\alpha D_i+o(\alpha)\) and a fixed readout \(m\),
\[
  m\circ K_\alpha=m\circ K_0+o(\alpha)
  \quad\Longleftrightarrow\quad
  m\circ D_i=0.
\]
\end{proposition}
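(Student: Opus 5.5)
The plan is to use linearity of post-composition with the fixed readout \(m\) to move the first-order expansion of \(K_\alpha\) through \(m\), and then read off the \(\alpha\)-coefficient. In the finite-state setting of \cref{sec:markov}, \(m\) is a stochastic matrix \(\R^Y\to\R^Z\), so \(A\mapsto m\circ A\) is a linear map between finite-dimensional spaces of signed kernels. It is therefore bounded in any fixed norm: \(\|m\circ A\|\le\|m\|\,\|A\|\). Applying \(m\) to the expansion \(K_\alpha=K_0+\alpha D_i+o(\alpha)\) and using this bound on the remainder gives
\[
  m\circ K_\alpha=m\circ K_0+\alpha\,(m\circ D_i)+o(\alpha).
\]
This is the identity already displayed before the statement; the only point to record is why the remainder stays \(o(\alpha)\), namely the boundedness of post-composition.

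For the direction \((\Leftarrow)\), assume \(m\circ D_i=0\). The middle term in the display vanishes, which leaves \(m\circ K_\alpha=m\circ K_0+o(\alpha)\).

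For the direction \((\Rightarrow)\), assume \(m\circ K_\alpha-m\circ K_0=o(\alpha)\). Subtracting this from the display gives \(\alpha\,(m\circ D_i)=o(\alpha)\). Dividing by \(\alpha\ne0\) and letting \(\alpha\to0\), the constant matrix \(m\circ D_i\) has norm bounded by a quantity tending to zero, so \(m\circ D_i=0\). Equivalently, \(m\circ D_i=\frac{d}{d\alpha}\big|_{0}(m\circ K_\alpha)\), and a function that is \(m\circ K_0+o(\alpha)\) has zero derivative at \(\alpha=0\).

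The only step needing care is the interpretation of \(o(\alpha)\) and of composition if \(Y\) or \(Z\) is not finite. In that case one interprets \(o(\alpha)\) in total-variation (operator) norm on signed kernels. Post-composition with a Markov kernel is then a contraction in that norm, so the same argument goes through. Since the paper works in the finite-state linearization, I would state the proof there and note this extension in a single sentence.
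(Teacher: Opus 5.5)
Your proposal is correct and matches the paper's proof: compose the first-order expansion of \(K_\alpha\) with the linear map \(m\), then compare the coefficients of \(\alpha\). You also spell out two points the paper leaves implicit. Post-composition with \(m\) is bounded, so the remainder stays \(o(\alpha)\), and the first-order coefficient is unique, which you show by dividing by \(\alpha\) and letting \(\alpha\to0\).
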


\begin{proof}
Compose the first-order expansion of \(K_\alpha\) with the linear map \(m\)
and compare the coefficient of \(\alpha\).
\end{proof}

This localized condition, rather than normalization alone, carries an
irrelevance interpretation.  For example, if \(Y=Y_1\times Y_2\) and \(m\)
marginalizes out \(Y_2\), then \(mD_i=0\) says that the intervention changes
the joint law only in directions invisible to the \(Y_1\)-marginal.  It makes
no claim about conditional independence: that conclusion would require a
causal model and an adjustment or separation condition specifying why this
marginal invariance holds.

\subsection{Copy compatibility and deterministic information}

For an endomorphism path based at the identity, the copy defect from
Proposition~\ref{prop:copy} is
\[
  B_{D_i}
  =
  \Delta_XD_i
  -(D_i\otimes\id+\id\otimes D_i)\Delta_X .
\]
The two terms describe different orders of operation.  In
\(\Delta_XD_i\), the system is perturbed and its output is then copied.  In
\((D_i\otimes\id+\id\otimes D_i)\Delta_X\), the unperturbed value is copied
first and the perturbation is applied in either output leg.  Consequently,
\(B_{D_i}=0\) is a Leibniz or coderivation identity: copying commutes with the
perturbation to first order.

This is much more restrictive than normalization.  A generic stochastic
perturbation introduces fresh randomness.  Copying one random outcome twice is
not the same as drawing two conditionally independent outcomes after copying
the input.  Copy preservation therefore detects deterministic information
flow, not arbitrary probabilistic evolution.  In a finite state space,
deterministic kernels form a discrete subset of the stochastic polytope, so a
nontrivial smooth stochastic path will generally have a nonzero copy defect.
The value of the test is precisely to expose that distinction.

For a general base kernel \(K_0:X\rightsquigarrow Y\), differentiating
\[
  \Delta_YK_\alpha
  =
  (K_\alpha\otimes K_\alpha)\Delta_X
\]
gives the base-point-dependent defect
\[
  B_{D_i;K_0}
  :=
  \Delta_YD_i
  -(D_i\otimes K_0+K_0\otimes D_i)\Delta_X.
\]
Thus \(B_{D_i;K_0}=0\) means that a copy-preserving realization remains
copy-preserving to first order at \(K_0\).  The earlier expression \(B_{D_i}\)
is the special case \(X=Y\) and \(K_0=\id\).

The causal use of this test requires an additional comparison.  Suppose an
observational update and an interventional update are represented by two
differentiable paths \(K_\alpha^{\mathrm{obs}}\) and
\(K_\alpha^{\mathrm{act}}\), after whatever adjustment the causal model
requires.  A first-order action--observation exchange statement would have to
establish
\[
  \left.\frac{d}{d\alpha}\right|_0
  K_\alpha^{\mathrm{obs}}
  =
  \left.\frac{d}{d\alpha}\right|_0
  K_\alpha^{\mathrm{act}}
\]
on the adjusted readout, together with the relevant copy and normalization
compatibilities.  Vanishing copy defect makes such a comparison structurally
coherent; it does not by itself prove the equality or supply the adjustment
condition.

\subsection{Visible involutivity and closure under sequential intervention}

The third test belongs to the geometry of the parameter manifold.  Let
\(\Phi_i^t\) and \(\Phi_j^t\) be local flows generated by intervention fields
\(v_i\) and \(v_j\).  The Lie bracket measures the leading discrepancy between
performing the two small interventions in opposite orders.  With a consistent
choice of commutator convention,
\[
  \Phi_j^{-t}\circ\Phi_i^{-t}\circ\Phi_j^t\circ\Phi_i^t(p)
  =
  p+t^2[v_i,v_j](p)+O(t^3).
\]
Projecting this displacement normal to \(\Dvis\) yields
\[
  t^2(I-\Pvis)[v_i,v_j](p)+O(t^3).
\]
A zero residual therefore means that the order effect can still be expressed,
to leading nontrivial order, using the modeled visible intervention directions.
A nonzero residual means that closing the small commutator loop generates a
direction missing from the chosen visible span.

Requiring zero residuals for every pair of local generators makes \(\Dvis\)
involutive.  By Proposition~\ref{prop:residual-criterion}, this condition extends from a
frame to all local sections of the distribution; by the Frobenius theorem, the
distribution is then tangent to local intervention leaves.  Each leaf collects
model points reachable by composing the visible local flows, at least within
the neighborhood in which the constant-rank assumptions hold.

The test concerns closure, not commutativity.  A bracket may be nonzero and
still lie entirely in \(\Dvis\).  In that case intervention order matters, but
the order effect remains representable within the modeled intervention family.
Conversely, the residual depends on which directions were included in
\(\Dvis\): omitting a legitimate direction can create a residual, while adding
a sufficiently flexible direction can absorb one.

\subsection{Relation to Pearl's rules}

Pearl's three rules concern equality of finite interventional probability
expressions under conditional-independence statements read from mutilated
causal graphs \citep{pearl2009causality}.  The \IC\ compatibility tests above
concern derivatives and local flows.  Nevertheless, there is a useful analogy
at the level of the operations being controlled:
\begin{center}
\begin{tabular}{p{0.25\linewidth}p{0.30\linewidth}p{0.34\linewidth}}
\toprule
\textbf{Classical operation} & \textbf{Tangent-level condition}
& \textbf{Additional bridge required}\\
\midrule
Insert or delete an irrelevant observation or action
& \(mD_i=0\): the selected marginal or readout is invariant to first order
& A causal graph or structural model establishing the relevant conditional
  independence, plus invariance along the finite flow\\
\addlinespace
Exchange an observation with an action
& Equality of adjusted observational and interventional derivatives, with
  normalization and copy coherence
& An explicit observation--action comparison map and the corresponding
  adjustment assumptions\\
\addlinespace
Compose admissible local interventions
& \((I-\Pvis)[v_i,v_j]=0\): the visible family is closed under infinitesimal
  commutators
& Completeness of the intervention fields and integration from local flows to
  the finite operations of interest\\
\bottomrule
\end{tabular}
\end{center}

The first row can play the tangent role of either deletion of observations or
deletion of actions, depending on what \(D_i\) represents and which readout
\(m\) is retained.  The second row is the closest analogue of
action--observation exchange, but its central content is equality of two
derivatives after adjustment; copy compatibility is a coherence condition on
that equality.  The third row has no exact one-to-one counterpart among
Pearl's rules.  It is an additional geometric requirement ensuring that the
chosen family of infinitesimal interventions is closed under sequential
composition.

This is the sense in which the tests form a \emph{tangent-level shadow} of
some operations appearing in do-calculus.  The phrase does not require a
tangent structure on the entire category of stochastic kernels.  In the
finite-state construction, differentiation takes place in the ambient
finite-dimensional vector spaces of signed kernels; on \(\M\), it takes place
in the ordinary tangent bundle.  A genuinely tangent-categorical formulation
would additionally have to specify a tangent functor, its natural
transformations and coherence axioms, and its compatibility with the monoidal
and Markov structure.

\subsection{When the \texorpdfstring{\IC}{IC} tests form a joint calculus}

The \IC\ tests live at two mathematical levels.  Normalization, marginal invariance,
and copy compatibility concern derivatives \(D_i\) of kernels.  Visible
involutivity concerns vector fields \(v_i\) on a statistical model.  To use them
together, one must specify a differentiable realization
\[
  \rho_i:v_i\longmapsto D_i
\]
that describes how a parameter-space intervention field changes the relevant
stochastic kernel.  The realization should also identify the readouts \(m\),
the visible distribution, and any observational--interventional comparison
maps.

Once these data are supplied, the three tests provide a layered diagnostic:
\begin{enumerate}[leftmargin=*,itemsep=3pt]
  \item normalization verifies that the realized derivative is probabilistically
        admissible;
  \item marginal and copy tests determine which readouts and deterministic
        information structures it preserves to first order; and
  \item involutivity determines whether the realized family is geometrically
        closed under local sequential intervention.
\end{enumerate}
Passing all three establishes local compatibility of the specified
realization.  Promoting that compatibility to a causal identification result
requires the structural and statistical assumptions developed in
\cref{sec:identification}.

\section{Relation of \texorpdfstring{\IC}{IC} to Estimation and BRIDGE/SKFM}
\label{sec:estimation}

The geometric portion of \IC\ suggests an empirical workflow:
\begin{enumerate}[leftmargin=*,itemsep=2pt]
  \item specify or estimate intervention-response fields \(v_i\);
  \item choose a visible distribution and metric;
  \item estimate Jacobians and brackets
        \([v_i,v_j]=Dv_j\,v_i-Dv_i\,v_j\);
  \item project brackets onto the estimated visible span; and
  \item report tangential coefficients, normal residuals, conditioning of the
        frame Gram matrix, and uncertainty under resampling.
\end{enumerate}

BRIDGE/SKFM is an algorithmic realization of this pattern
\citep{mahadevan2026latentconfoundedcausaldiscovery}.  Within \IC, its bracket
and center statistics are diagnostics of the estimated intervention geometry.
Interpreting them as detectors of latent confounding additionally requires an
identification theorem for the algorithm's structural model class.

Several practical cautions follow:
\begin{itemize}[leftmargin=*,itemsep=2pt]
  \item numerical differentiation amplifies field-estimation error;
  \item projection residuals are unstable when the visible frame is nearly
        rank deficient;
  \item a change in intervention scaling changes bracket magnitudes;
  \item omission of a relevant visible direction can manufacture a residual;
        and
  \item adding flexible directions can absorb a residual without explaining
        its causal source.
\end{itemize}

A finite-sample result should therefore control field estimation, Jacobian
estimation, subspace perturbation, and the gap separating zero from nonzero
normal components.  These statistical questions are outside the purely
differential-geometric Frobenius theorem.

\section{What Is Needed for a Causal Identification Theorem}
\label{sec:identification}

To turn the residual diagnostic into a statement about hidden causal structure,
one needs assumptions beyond smoothness:
\begin{enumerate}[leftmargin=*,itemsep=2pt]
  \item a structural causal model or controlled-kernel class specifying how
        interventions alter mechanisms;
  \item intervention validity and a known relation between experimental controls
        and the fields \(v_i\);
  \item a completeness condition ensuring that the visible span contains all
        directions generated in the causally sufficient case;
  \item an exclusion condition ruling out fully observed nonintegrable protocols
        such as Example~\ref{ex:no-latent-residual};
  \item a faithfulness or separation condition preventing hidden structure from
        producing a zero residual; and
  \item an estimation theorem transferring population residual gaps to finite
        samples.
\end{enumerate}

Under such assumptions one may seek implications of the form
\[
 \text{normal residual}
 \Longrightarrow
 \text{model misspecification or omitted causal mechanism},
\]
followed by additional conditions that isolate latent confounding from other
forms of misspecification.  The unrestricted geometric framework alone cannot
make that final distinction.

Likewise, recovering finite Pearl do-calculus identities from infinitesimal
conditions would require substantially more than a derivative vanishing at one
point: one needs invariance along an entire flow, existence and completeness of
that flow, compatible graph-surgery semantics, and the appropriate chartwise
conditional-independence premises.  No such recovery theorem is asserted here.

\section{Discussion}

Here ``Frobenius'' has its classical differential-geometric meaning: the
integrability theorem for
constant-rank distributions.  Markov-category copy/discard structure is treated
separately as a commutative comonoid, and its tangent calculations are performed
only in a linearized setting where derivatives are typed.

This separation clarifies the contribution of \IC.  Lie brackets measure the order
dependence of infinitesimal intervention protocols.  Normal residuals measure
failure of the chosen visible distribution to close.  Copy defects measure
failure of a chosen kernel perturbation to preserve deterministic copying.
These are related diagnostics only when a concrete causal model supplies maps
between the parameter, kernel, and observation levels.

Graphs remain valuable when a structural model is available, but the geometric
diagnostics can be computed before selecting a unique graph presentation.  That
presentation independence is pragmatic rather than absolute: the fields,
visible span, metric, and intervention implementation must still be specified.

\section{Changes in This Revision}
\label{sec:changes}

This revision makes the following substantive corrections:
\begin{enumerate}[leftmargin=*,itemsep=2pt]
  \item removes the claimed Kan-extension duality between conditioning and
        intervention and deletes tangent Kan-transport results derived from it;
  \item recasts the three infinitesimal conditions as compatibility tests and
        states precisely the additional assumptions needed for their
        tangent-level analogy with operations in do-calculus;
  \item replaces the asserted recovery of discrete do-calculus with an explicit
        statement of the additional hypotheses such a theorem would require;
  \item corrects the statistical tangent space from all of \(L_0^2(P)\) to the
        finite-dimensional score span;
  \item retains \(\StatInf\) as a category of presented statistical models but
        replaces its unverified tangent endofunctor with the explicit
        forgetful functor to smooth parameter manifolds;
  \item separates parameter-space vector fields from derivatives of stochastic
        kernels and supplies a well-typed finite-state linearization;
  \item distinguishes copy/discard comonoids from full Frobenius algebras;
  \item removes the undeveloped Hochschild obstruction class and categorical
        presentation adjunction;
  \item states the valid residual--involutivity equivalence as an application of
        the classical Frobenius theorem; and
  \item adds counterexamples showing that bracket residuals do not characterize
        latent confounding without structural assumptions.
\end{enumerate}

\section{Conclusion}

The central idea of \IC\ is simple: a smooth intervention
protocol has a derivative, and several such derivatives define a local
distribution of intervention directions.  Lie brackets reveal whether that
distribution is closed, while normal residuals quantify its failure to be
integrable.  This geometry is rigorous, coordinate invariant at the level of
vanishing, and directly relevant to algorithms that estimate local intervention
fields.

Its causal meaning is conditional rather than automatic.  Residuals diagnose a
chosen intervention geometry; they do not by themselves identify latent
confounding, recover a causal graph, or reproduce do-calculus.  The geometric
theorem and the accompanying compatibility diagnostics provide a foundation on
which model-specific identification and finite-sample results can be built.

\bibliography{references}

@misc{mahadevan2026latentconfoundedcausaldiscovery,
      title={{Latent Confounded Causal Discovery via Lie Bracket Geometry}}, 
      author={Sridhar Mahadevan},
      year={2026},
      eprint={2606.19610},
      archivePrefix={arXiv},
      primaryClass={cs.LG},
      url={https://arxiv.org/abs/2606.19610}, 
}

@book{pearl2009causality,
  title     = {Causality: Models, Reasoning, and Inference},
  author    = {Pearl, Judea},
  edition   = {2},
  year      = {2009},
  publisher = {Cambridge University Press}
}

@article{rubin2005causal,
  title   = {Causal Inference Using Potential Outcomes: Design, Modeling, Decisions},
  author  = {Rubin, Donald B.},
  journal = {Journal of the American Statistical Association},
  volume  = {100},
  number  = {469},
  pages   = {322--331},
  year    = {2005},
  doi     = {10.1198/016214504000001880}
}

@book{lee2012smooth,
  title     = {Introduction to Smooth Manifolds},
  author    = {Lee, John M.},
  series    = {Graduate Texts in Mathematics},
  volume    = {218},
  edition   = {2},
  year      = {2012},
  publisher = {Springer},
  doi       = {10.1007/978-1-4419-9982-5}
}

@book{amari2016information,
  title     = {Information Geometry and Its Applications},
  author    = {Amari, Shun-ichi},
  year      = {2016},
  publisher = {Springer},
  doi       = {10.1007/978-4-431-55978-8}
}

@misc{guo2022causaldefinetti,
  title         = {Causal de {Finetti}: On the Identification of Invariant Causal Structure in Exchangeable Data},
  author        = {Guo, Siyuan and T{\'o}th, Viktor and Sch{\"o}lkopf, Bernhard and Husz{\'a}r, Ferenc},
  year          = {2022},
  eprint        = {2203.15756},
  archivePrefix = {arXiv},
  primaryClass  = {cs.LG},
  url           = {https://arxiv.org/abs/2203.15756}
}

@misc{guo2024dofinetti,
  title         = {Do {Finetti}: On Causal Effects for Exchangeable Data},
  author        = {Guo, Siyuan and Zhang, Chi and Mohan, Karthika and Husz{\'a}r, Ferenc and Sch{\"o}lkopf, Bernhard},
  year          = {2024},
  eprint        = {2405.18836},
  archivePrefix = {arXiv},
  primaryClass  = {cs.LG},
  url           = {https://arxiv.org/abs/2405.18836}
}

@article{fritz2020synthetic,
  title   = {A Synthetic Approach to {Markov} Kernels, Conditional Independence and Theorems on Sufficient Statistics},
  author  = {Fritz, Tobias},
  journal = {Advances in Mathematics},
  volume  = {370},
  pages   = {107239},
  year    = {2020},
  doi     = {10.1016/j.aim.2020.107239},
  url     = {https://arxiv.org/abs/1908.07021}
}

@article{fritz2023dseparation,
  title   = {The {d-Separation} Criterion in Categorical Probability},
  author  = {Fritz, Tobias and Klingler, Andreas},
  journal = {Journal of Machine Learning Research},
  volume  = {24},
  number  = {46},
  pages   = {1--49},
  year    = {2023},
  url     = {https://jmlr.org/papers/v24/22-0916.html}
}

@misc{jacobs2018causal,
  title         = {Causal Inference by String Diagram Surgery},
  author        = {Jacobs, Bart and Kissinger, Aleks and Zanasi, Fabio},
  year          = {2018},
  eprint        = {1811.08338},
  archivePrefix = {arXiv},
  primaryClass  = {cs.LO},
  url           = {https://arxiv.org/abs/1811.08338}
}

@article{cho2017disintegration,
  title         = {Disintegration and {Bayesian} Inversion via String Diagrams},
  author        = {Cho, Kenta and Jacobs, Bart},
  journal       = {arXiv preprint arXiv:1709.00322},
  year          = {2017},
  eprint        = {1709.00322},
  archivePrefix = {arXiv},
  primaryClass  = {math.CT},
  url           = {https://arxiv.org/abs/1709.00322}
}

@article{rosicky1984abstract,
  title   = {Abstract Tangent Functors},
  author  = {Rosick{\'y}, Ji{\v r}{\'i}},
  journal = {Diagrammes},
  volume  = {12},
  pages   = {JR1--JR11},
  year    = {1984}
}

@article{cockett2014differential,
  title   = {Differential Structure, Tangent Structure, and {SDG}},
  author  = {Cockett, J. R. B. and Cruttwell, G. S. H.},
  journal = {Applied Categorical Structures},
  volume  = {22},
  pages   = {331--417},
  year    = {2014},
  doi     = {10.1007/s10485-013-9312-0}
}
\bibliographystyle{abbrvnat}

\end{document}